\newtheorem{theorem}{Theorem}[section]
\newtheorem{corollary}[theorem]{Corollary}
\newtheorem{lemma}[theorem]{Lemma}
\newtheorem{proposition}[theorem]{Proposition}
\newtheorem{definition}[theorem]{Definition}
\numberwithin{equation}{section}
\begin{document}

\title{Comments on the Chernoff $\sqrt{n}$-Lemma}

\author{ Valentin A. Zagrebnov\\
Aix-Marseille Universit\'{e}, CNRS, Centrale Marseille, I2M UMR 7373 \\
13453 Marseille, France  \\
{valentin.zagrebnov@univ-amu.fr}
\thanks{The author is grateful to Mathematical Department of the University of Auckland (New Zealand)
for hospitality during the writing this paper. This visit was supported by the EU Marie Curie
IRSES program, project `AOS', No. 318910.}
}

\maketitle

\textit{Dedicated to Pavel Exner in occasion of his 70th Birthday}

\begin{abstract}
\noindent The Chernoff $\sqrt{n}$-Lemma is revised. This concerns two aspects: an improvement of the
Chernoff estimate in the strong operator topology and an operator-norm estimate for quasi-sectorial contractions.
Applications to the Lie-Trotter product formula approximation for semigroups is presented.
\end{abstract}

\noindent Classification: primary 47D06; secondary 47D60.
%\end{classification}

\noindent Keywords: Chernoff lemma, semigroup theory, product formula, convergence rate.
%\end{keywords}

\maketitle

%%%%%%%%%%%%%%%%%%%%%%%%%%%%%%%%%%%%%%%%%%%%%% Section %%%%%%%%%%%%%%%%%%%%%%%%%%%%%%%%%%%%%%%%%%%%%%%%%%%%%%%%%%%
\section{Introduction: $\sqrt{n}$-Lemma}
%%%%%%%%%%%%%%%%%%%%%%%%%%%%%%%%%%%%%%%%%%%%%%%%%%%%%%%%%%%%%%%%%%%%%%%%%%%%%%%%%%%%%%%%%%%%%%%%%%%%%%%%%%%%%%%%%%

The Chernoff $\sqrt{n}$-Lemma is a key point in the theory of semigroup approximations proposed in \cite{Ch}.
For the reader convenience we recall this lemma below.

%%%%%%%%%%%%%%%%%%%%%%%%%%%%%%%%%%%%%%%%%%%%%%%%%%%% Lemma %%%%%%%%%%%%%%%%%%%%%%%%%%%%%%%%%%%%%%%%%%%%%%%%%%%%%%%
\begin{lemma}\label{lem:2.1.9}
Let $C$ be a contraction on a Banach space $\mathfrak{X}$. Then $\{e^{t (C - \mathds{1}}\}_{t\geq 0}$ is
a norm-continuousn contraction semigroup on $\mathfrak{X}$ and one has the estimate
\begin{equation}\label{eq:2.1.10}
\|(C^n - e^{n(C-\mathds{1})})x\| \leq \ \sqrt{n} \ \|(C-\mathds{1})x\| \ \ .
\end{equation}
for all $x\in\mathfrak{X} $ and $\ n \in \mathbb{N}$.
\end{lemma}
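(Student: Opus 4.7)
The norm-continuity and contractivity of $\{e^{t(C-\mathds{1})}\}_{t\geq 0}$ I would dispatch quickly: the series
$$
e^{t(C-\mathds{1})} = e^{-t}\sum_{k=0}^{\infty}\frac{t^{k}}{k!}\,C^{k}
$$
converges in operator norm by the contraction bound $\|C^{k}\|\leq 1$, so norm-continuity in $t$ and the semigroup law follow termwise, and the same bound gives $\|e^{t(C-\mathds{1})}\|\leq e^{-t}\sum_k t^k/k! = 1$.

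For the key estimate \eqref{eq:2.1.10}, the plan is to view $e^{n(C-\mathds{1})}$ as a Poisson-weighted average of powers of $C$. Since $e^{-n}\sum_{k\geq 0} n^{k}/k! = 1$, I can write
$$
\bigl(C^{n} - e^{n(C-\mathds{1})}\bigr)x \;=\; e^{-n}\sum_{k=0}^{\infty}\frac{n^{k}}{k!}\bigl(C^{n} - C^{k}\bigr)x.
$$
A telescoping argument, combined with $\|C^{j}\|\leq 1$, yields
$$
\|(C^{n}-C^{k})x\| \;=\; \Bigl\|\sum_{j=\min(n,k)}^{\max(n,k)-1} C^{j}(C-\mathds{1})x\Bigr\| \;\leq\; |n-k|\,\|(C-\mathds{1})x\|,
$$
valid for either ordering of $n$ and $k$. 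Inserting this in the previous identity gives
$$
\|(C^{n} - e^{n(C-\mathds{1})})x\| \;\leq\; \Bigl(e^{-n}\sum_{k=0}^{\infty}\frac{n^{k}}{k!}\,|n-k|\Bigr)\,\|(C-\mathds{1})x\|.
$$

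To close the estimate with the sharp constant $\sqrt{n}$, I would apply the Cauchy--Schwarz inequality with respect to the Poisson weights $p_{k}(n) := e^{-n}n^{k}/k!$:
$$
\sum_{k=0}^{\infty} p_{k}(n)\,|n-k| \;\leq\; \Bigl(\sum_{k=0}^{\infty} p_{k}(n)\Bigr)^{1/2}\Bigl(\sum_{k=0}^{\infty} p_{k}(n)\,(n-k)^{2}\Bigr)^{1/2}.
$$
The first factor equals $1$, and the second is precisely the variance of a Poisson$(n)$ random variable, hence equal to $n$, giving $\sqrt{n}$. The main subtlety is not in the algebra but in justifying the interchange of the infinite sum with the norm, which is guaranteed by absolute convergence in the norm topology (using $\|(C^{n}-C^{k})x\|\leq 2\|x\|$ together with the rapidly decaying Poisson weights). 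Once this is in place, the inequality \eqref{eq:2.1.10} follows immediately.
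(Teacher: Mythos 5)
Your argument is correct and is essentially the paper's proof: the same Poisson-weighted representation $C^{n}-e^{n(C-\mathds{1})}=e^{-n}\sum_{k}\frac{n^{k}}{k!}(C^{n}-C^{k})$, the same telescoping bound $\|(C^{n}-C^{k})x\|\leq|n-k|\,\|(C-\mathds{1})x\|$, and the same Cauchy--Schwarz step against Poisson weights to produce the variance $n$ and hence $\sqrt{n}$. The only (welcome) addition is your explicit justification of the sum--norm interchange, which the paper leaves tacit.
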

%%%%%%%%%%%%%%%%%%%%%%%%%%%%%%%%%%%%%%%%%%%%%%%%%%%%%%%%%%%%%%%%%%%%%%%%%%%%%%%%%%%%%%%%%%%%%%%%%%%%%%%%%%%%%%%%%%
%
\begin{proof}
To prove the inequality (\ref{eq:2.1.10}) we use the representation
\begin{equation}\label{eq:2.1.11}
C^n-e^{n(C-\mathds{1})} = e^{-n}\sum_{m=0}^\infty {n^m\over m!}(C^n-C^m) \ .
\end{equation}
To proceed we insert
\begin{equation}\label{eq:2.1.12}
\|(C^n-C^m)x\|  \leq  \left\|(C^{|n-m|}  - \mathds{1})x \right\|\leq  |m-n| \|(C- \mathds{1})x \| \ ,
\end{equation}
into (\ref{eq:2.1.11}) to obtain by the Cauchy-Schwarz inequality  the estimate:
\begin{equation}\label{eq:2.1.13}
\begin{split}
&\|(C^n - e^{n(C-\mathds{1})})x \| \leq \|(C - \mathds{1})x \| \  e^{-n} \ \sum_{m=0}^\infty {n^m\over m!} |m-n| \leq\\
&  \{\sum_{m=0}^\infty  e^{-n} \  {n^m\over m!} |m-n|^2\} ^{1/2} \|(C - \mathds{1})x\| \ , \ x\in\mathfrak{X} \ ,
\end{split}
\end{equation}
Note that the sum in the right-hand side of (\ref{eq:2.1.13}) can be calculated explicitly. This gives the value $n$,
which yields (\ref{eq:2.1.10}).
%\hfill $\square$
\end{proof}
%%%%%%%%%%%%%%%%%%%%%%%%%%%%%%%%%%%%%%%%%%%%%%%%%%%%%%%%%%%%%%%%%%%%%%%%%%%%%%%%%%%%%%%%%%%%%%%%%%%%%%%%%%%%%%%%%%

The aim of the present note is to revise the Chernoff $\sqrt{n}$-Lemma in two directions.
First, we improve the $\sqrt{n}$-estimate (\ref{eq:2.1.10}) for contractions. Then we apply this new estimate
to the proof of the Trotter product formula in the \textit{strong} operator topology (Section 2).

Second, we use the idea of Section 2 to lift these results in Section 3 to the \textit{operator-norm} estimates
for a special class of contractions: the \textit{quasi-sectorial} contractions.

%%%%%%%%%%%%%%%%%%%%%%%%%%%%%%%%%%%%%%%%%%%%%%%%%%% Section %%%%%%%%%%%%%%%%%%%%%%%%%%%%%%%%%%%%%%%%%%%%%%%%%%%%%
\section{Revised $\sqrt{n}$-Lemma and Lie-Trotter product formula}
%%%%%%%%%%%%%%%%%%%%%%%%%%%%%%%%%%%%%%%%%%%%%%%%%%%%%%%%%%%%%%%%%%%%%%%%%%%%%%%%%%%%%%%%%%%%%%%%%%%%%%%%%%%%%%%%%
%%%%%%%%%%%%%%%%%%%%%%%%%%%%%%%%%%%%%%%%%%%%%%%%%%%%%%%%%%%%%%%%%%%%%%%%%%%%%%%%%%%%%%%%%%%%%%%%%%%%%%%%%%%%%%%%%%
We start by a technical lemma. It is a \textit{revised} version of the
Chernoff $\sqrt{n}$-Lemma \ref{lem:2.1.9}. Our estimate (\ref{eq:1.8.51}) in $\sqrt[3]{n}$-Lemma \ref{lem:1.8.23} is
better than (\ref{eq:2.1.10}). The scheme of the proof will be useful later (Section 3), when we use it for proving the
convergence of Lie-Trotter product formula in the \textit{operator-norm} topology.
%%%%%%%%%%%%%%%%%%%%%%%%%%%%%%%%%%%%%%%%%%%%%%%%%%%%%% Lemma %%%%%%%%%%%%%%%%%%%%%%%%%%%%%%%%%%%%%%%%%%%%%%%%%%%%%%%%%%
\begin{lemma}\label{lem:1.8.23}
Let $C$ be a contraction on a Banach space $\mathfrak{X}$. Then $\{e^{t (C - \mathds{1}}\}_{t\geq 0}$ is a norm-continuous
contraction semigroup on $\mathfrak{X}$ and one has the estimate
\begin{equation}\label{eq:1.8.51}
\|(C^n-e^{n(C-\mathds{1})})x\|\leq \left[{1\over n^{2 \delta}} + n^{\delta +
1/2}\right]\|(\mathds{1}-C)x\|, \ \ n\in \mathbb{N}.
\end{equation}
for all $x\in \mathfrak{X}$ and $\delta \in \mathbb{R}$.
\end{lemma}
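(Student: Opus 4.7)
The plan is to mimic Chernoff's scheme from Lemma \ref{lem:2.1.9}. Starting from the Poisson representation
\[
C^n-e^{n(C-\mathds{1})}=e^{-n}\sum_{m=0}^{\infty}\frac{n^{m}}{m!}\,(C^{n}-C^{m}),
\]
the triangle inequality combined with $\|(C^{n}-C^{m})x\|\le|m-n|\,\|(\mathds{1}-C)x\|$ from (\ref{eq:2.1.12}) reduces the claim to the scalar estimate
\[
\Sigma_{n}:=e^{-n}\sum_{m=0}^{\infty}\frac{n^{m}}{m!}|m-n|\;\le\;\frac{1}{n^{2\delta}}+n^{\delta+1/2}.
\]

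The crucial new ingredient, in contrast with Chernoff's single-stroke Cauchy--Schwarz bound $\Sigma_n\le\sqrt n$, is to split the series at the threshold $|m-n|=n^{1/2+\delta}$. For the ``close'' range $\{m:|m-n|\le n^{1/2+\delta}\}$ I would replace $|m-n|$ by its maximal value $n^{1/2+\delta}$ and bound the remaining Poisson probabilities trivially by $1$; this produces the summand $n^{\delta+1/2}$. For the complementary ``far'' range I would invoke the Poisson second moment $\sum_m e^{-n}\frac{n^m}{m!}(m-n)^2=n$ together with Chebyshev's inequality, yielding a tail weight of order $n^{-2\delta}$ for $|m-n|>n^{1/2+\delta}$, and then reduce the truncated first-moment contribution to this tail weight in order to recover the summand $1/n^{2\delta}$.

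\paragraph{Main obstacle.} The delicate point is pushing the far-range contribution down to $n^{-2\delta}$: a blunt Chebyshev-plus-Cauchy-Schwarz on that piece yields only $n^{1/2-\delta}$, which is strictly weaker for $\delta<1/2$. I would expect to close this gap by exploiting the Poisson summation-by-parts identity $(m-n)P_n(m)=n\bigl(P_n(m-1)-P_n(m)\bigr)$, which telescopes the far-tail sum into boundary values $nP_n(n\pm n^{1/2+\delta})$ controllable by Stirling's formula, or alternatively by replacing the second-moment Chebyshev bound with a higher-moment one. Once the two contributions are in place, the form $n^{-2\delta}+n^{\delta+1/2}$ balances when $\delta=-1/6$, which yields the $\sqrt[3]{n}$-rate that gives the lemma its name.
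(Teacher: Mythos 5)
Your scheme --- Poisson representation, splitting at $\epsilon_n = n^{1/2+\delta}$, probability-one bound on the central range --- coincides exactly with the paper's, and your central-part estimate is the same as theirs. The ``main obstacle'' you flag, however, is not a shortcoming of your attempt: it is an error in the paper's own proof. Display (\ref{eq:1.8.53}) asserts $e^{-n}\sum_{|m-n|>\epsilon_n}\frac{n^m}{m!}|m-n| \leq n/\epsilon_n^2$, but the left-hand side is the truncated first moment $\mathbb{E}\bigl[|X_n-n|\,\mathds{1}_{\{|X_n-n|>\epsilon_n\}}\bigr]$, not a probability; Chebyshev controls only the probability, and Cauchy--Schwarz on the truncated moment yields $n/\epsilon_n = n^{1/2-\delta}$, exactly the weaker bound you identified.

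More decisively, no refinement of the tail estimate (Abel summation, higher moments, exact Poisson asymptotics) can rescue the lemma as stated, because (\ref{eq:1.8.51}) is actually false for $\delta<0$. Take $\mathfrak{X}=\mathbb{C}$ and $C=e^{i\theta}$ with $\theta = n^{-1/2}$: then $\|(\mathds{1}-C)x\|\sim n^{-1/2}|x|$, while $|C^n - e^{n(C-\mathds{1})}| = |1 - e^{n(\cos\theta - 1)}e^{in(\sin\theta-\theta)}| \to 1 - e^{-1/2}$, so the ratio to $\|(\mathds{1}-C)x\|$ grows like $\sqrt{n}$ and cannot satisfy any $O(n^{1/3})$ bound carrying the prefactor $\|(\mathds{1}-C)x\|$. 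What the paper in fact uses downstream (see (\ref{eq:1.8.59})) is the weaker but correct variant: bound the tail summand by $\|C^n - C^m\|\leq 2$ and apply Chebyshev to the pure tail probability, giving $2(n/\epsilon_n^2)\|x\| = 2n^{-2\delta}\|x\|$. The provable estimate is therefore $\|(C^n - e^{n(C-\mathds{1})})x\| \leq 2n^{-2\delta}\|x\| + n^{\delta+1/2}\|(\mathds{1}-C)x\|$, with $\|x\|$ rather than $\|(\mathds{1}-C)x\|$ in the first term. That is the statement you should aim to prove; your analysis of the central part already supplies the second term as required.
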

%%%%%%%%%%%%%%%%%%%%%%%%%%%%%%%%%%%%%%%%%%%%%%%%%%%%%%%%%%%%%%%%%%%%%%%%%%%%%%%%%%%%%%%%%%%%%%%%%%%%%%%%%%%%%%%%%%%%%%%%%%%%%
\begin{proof}
Since the operator $C$ is bounded and $\|C\|\leq 1$, $(\mathds{1}-C)$ is a generator of the norm-continuous
semigroup, which is also a contraction:
\begin{equation*}
\|e^{\ t (C-\mathds{1})}\| \leq e^{-t} \|\sum_{m=0}^{\infty} \frac{t^m}{m!} C^m\| \leq 1 \ , \ \ \ t \geq 0 \ .
\end{equation*}
To estimate (\ref{eq:1.8.51}) we use the representation
\begin{equation}\label{eq:1.8.52}
C^n-e^{n(C-\mathds{1})} = e^{-n}\sum_{m=0}^\infty {n^m\over m!}(C^n-C^m) \ .
\end{equation}

Let $\epsilon_n:=n^{\delta +1/2}$, $n\in \mathbb{N}$. We split the sum (\ref{eq:1.8.52}) into two parts: the
\textit{central} part for $|m-n|\leq\epsilon_n$ and \textit{tails} for $|m-n|>\epsilon_n$.

To estimate the \textit{tails} we use the \textit{Tchebychev inequality}. Let $X_n$ be a \textit{Poisson random variable}
of the parameter $n$, i.e., the probability $\mathbb{P}\{X_{n} = m\} = n^m e^{-n}/m! $.
One obtains for the expectation ${\mathbb{E}}(X_{n})=n$ and for the variance $\mbox{Var}(X_{n})=n$. Then by the Tchebychev
inequality:
\begin{equation*}
 \ \ \mathbb{P}\{|X_{n} - {\mathbb{E}}(X_{n})|>\epsilon\} \leq {\mbox{Var}(X_{n})\over\epsilon^2} \ , \
 \ {\rm{for \ any}\ } \ \epsilon>0 \ .
\end{equation*}

Now to estimate (\ref{eq:1.8.52}) we note that
\begin{eqnarray*}
\|(C^n-C^m)x\| & = & \|C^{n-k}(C^{k}-C^{m-n+k})x\|\\
& \leq & |m-n| \|C^{n-k}(\mathds{1}-C) x\| \ , \ \ \ k= 0,1,\ldots,n \ .
\end{eqnarray*}
Put in this inequality $k = [\epsilon_n]$, here $[\cdot]$ denotes the integer part. Then by $\|C\|\leq 1$ and
by the Tchebychev inequality we obtain the  estimate for \textit{tails}:
\begin{eqnarray}\label{eq:1.8.53}
&& \hspace{2cm} e^{-n}\sum_{|m-n|>\epsilon_n} {n^m\over m!}\|(C^n-C^m)x\| \leq  \\
&&\|(\mathds{1}-C) x\| \ e^{-n}\sum_{|m-n|>\epsilon_n} {n^m\over m!} |m-n|
\leq {n\over\epsilon_n^2} \|(\mathds{1}-C) x\| = {1 \over n^{2 \delta}} \|(\mathds{1}-C)x\| \ .\nonumber
\end{eqnarray}

To estimate the \textit{central} part of the sum (\ref{eq:1.8.52}), when $|m-n|\leq\epsilon_n$, note that:
\begin{eqnarray}\label{eq:1.8.54}
\|(C^n-C^m)x\| &\leq & |m-n| \|C^{n-[\epsilon_n]}(\mathds{1}-C) x\| \\
& \leq & \epsilon_n \ \|(\mathds{1}-C)x\| \ . \nonumber
\end{eqnarray}
Then we obtain:
\begin{equation*}
e^{-n}\sum_{|m-n|\leq\epsilon_n} {n^m\over m!}\|(C^n-C^m)x\| \leq n^{\delta +1/2} \ \|(\mathds{1}-C)x\| \ ,
\end{equation*}
for $n\in {\mathbb{N}}$. This last estimate together with (\ref{eq:1.8.53}) yield (\ref{eq:1.8.51}).
\end{proof}
%%%%%%%%%%%%%%%%%%%%%%%%%%%%%%%%%%%%%%%%%%%%%%%%%%%%%%%%%%%%%%%%%%%%%%%%%%%%%%%%%%%%%%%%%%%%%%%%%%%%%%%%%%%%%%%%%%%%%
Note that for $\delta= 0$ the estimate (\ref{eq:1.8.51}) gives for large $n$ the same asymptotic as the Chernoff
$\sqrt{n}$-Lemma, whereas for optimal value $\delta= (-1/6)$ the asymptotic $2 \sqrt[3]{n}$ is better than
(\ref{eq:2.1.10}). We call this result the $\sqrt[3]{n}$-Lemma.
%%%%%%%%%%%%%%%%%%%%%%%%%%%%%%%%%%%%%%%%%%%%%%%%%%%%%%%% Lemma %%%%%%%%%%%%%%%%%%%%%%%%%%%%%%%%%%%%%%%%%%%%%%%%%%%%%%
\begin{theorem}\label{lem:1.8.24}
Let $\Phi: t\mapsto\Phi(t)$ be a function from $\mathbb{R}^+$ to contractions on $\mathfrak{X}$ such that
$\Phi(0) = \mathds{1}$. Let $\{U_{A}(t)\}_{t\geq 0}$ be a contraction semigroup, and let $D \subset {\rm{dom}}(A)$ be
a core of the generator $A$. If the function $\Phi(t)$ has a strong right-derivative $\Phi'(+0)$ at $t=0$ and
\begin{equation*}
\Phi'(+0)u := \lim_{t\rightarrow +0} \frac{1}{t} (\Phi(t)- \mathds{1}) u = -\, A u \ ,
\end{equation*}
for all $u\in D$, then
\begin{equation}\label{eq:1.8.55}
\lim_{n \rightarrow \infty} [\Phi(t/n)]^n \, x = U_{A}(t)\, x  \ ,
\end{equation}
for all $t\in \mathbb{R}^+$ and $x\in \mathfrak{X}$
\end{theorem}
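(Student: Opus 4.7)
The strategy is to bridge $[\Phi(t/n)]^n$ to $U_A(t)$ through the semigroup $e^{n(\Phi(t/n)-\mathds{1})}$. The bridging step uses the $\sqrt[3]{n}$-Lemma \ref{lem:1.8.23}, and the second leg uses the convergence of generators on the core $D$.

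\textbf{Step 1: Apply the $\sqrt[3]{n}$-Lemma.} Fix $t>0$ and set $C_n := \Phi(t/n)$, which is a contraction. Applying Lemma \ref{lem:1.8.23} to $C_n$ with the optimal choice $\delta = -1/6$ yields
\begin{equation*}
\| [\Phi(t/n)]^n x - e^{n(\Phi(t/n)-\mathds{1})} x \| \leq 2 n^{1/3}\,\|(\mathds{1}-\Phi(t/n))x\|,
\end{equation*}
for every $x\in\mathfrak{X}$ and $n\in\mathbb{N}$. In particular, for $u \in D$ the hypothesis $\Phi'(+0) u = -Au$ gives $(\mathds{1}-\Phi(t/n))u = (t/n) Au + o(1/n)$, so the right-hand side is of order $n^{1/3}\cdot O(1/n)=O(n^{-2/3})$ and hence vanishes as $n\to\infty$.

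\textbf{Step 2: Convergence of the semigroups $e^{n(\Phi(t/n)-\mathds{1})}$.} Introduce the bounded generators $A_n := (n/t)(\mathds{1}-\Phi(t/n))$, so that $e^{n(\Phi(t/n)-\mathds{1})} = e^{-tA_n}$ is a contraction semigroup (as established in Lemma \ref{lem:1.8.23}). By the hypothesis on $\Phi'(+0)$, one has $A_n u \to A u$ for every $u \in D$, and $D$ is a core of $A$. Since the $A_n$ generate contraction semigroups and $A$ generates the contraction semigroup $\{U_A(s)\}_{s\geq 0}$, the Trotter--Kato approximation theorem gives $e^{-tA_n} u \to U_A(t) u$ strongly, uniformly on compact $t$-intervals, for all $u$ in $\mathrm{dom}(A)$, and in particular for $u\in D$.

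\textbf{Step 3: Combining the two legs on $D$, then extending by density.} For $u\in D$, the triangle inequality
\begin{equation*}
\|[\Phi(t/n)]^n u - U_A(t) u\| \leq \|[\Phi(t/n)]^n u - e^{-tA_n} u\| + \|e^{-tA_n} u - U_A(t) u\|
\end{equation*}
and Steps 1--2 yield $[\Phi(t/n)]^n u \to U_A(t) u$. Since $D$ is a core of $A$, it is dense in $\mathrm{dom}(A)$, which in turn is dense in $\mathfrak{X}$; moreover both $[\Phi(t/n)]^n$ and $U_A(t)$ are contractions. A standard $3\varepsilon$-argument then extends the convergence from $u\in D$ to arbitrary $x\in\mathfrak{X}$, establishing (\ref{eq:1.8.55}).

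\textbf{Main obstacle.} The delicate point is Step 2: the generators $A_n$ are only known to converge to $A$ on the core $D$, and one must invoke (or reprove) the Trotter--Kato theorem to pass to strong convergence of the semigroups. Once that is granted, the $\sqrt[3]{n}$-estimate does the rest, and the density extension is routine because everything in sight is a contraction.
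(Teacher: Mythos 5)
Your proof is correct and follows essentially the same route as the paper: bridge $[\Phi(t/n)]^n$ to $U_A(t)$ through $e^{n(\Phi(t/n)-\mathds{1})}$ using Lemma \ref{lem:1.8.23}, invoke the Trotter--Kato (Trotter--Neveu--Kato) theorem on the bounded approximate generators $A_n=(n/t)(\mathds{1}-\Phi(t/n))$, and extend from the core $D$ by density and the uniform contraction bound. The only cosmetic difference is that you fix $\delta=-1/6$ at the outset and keep the whole bound in terms of $\|(\mathds{1}-\Phi(t/n))u\|=O(1/n)$, whereas the paper keeps $\delta$ free, replaces $\|(\mathds{1}-C)x\|\le 2\|x\|$ in one term, and thereby implicitly needs $0<\delta<1/2$; both variants close the argument.
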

%%%%%%%%%%%%%%%%%%%%%%%%%%%%%%%%%%%%%%%%%%%%%%%%%%%%%%%%%%%%%%%%%%%%%%%%%%%%%%%%%%%%%%%%%%%%%%%%%%%%%%%%%%%%%%%%%%%%%%
\begin{proof}
Consider the bounded approximation $A_n$ of generator $A$:
\begin{equation}\label{eq:1.8.56}
A_n(s) := \frac{\mathds{1} - \Phi(s/n)}{s/n} \ .
\end{equation}
This operator is \textit{accretive}:  $(A_n(s) + \zeta \mathds{1})^{-1} \in \mathcal{L}(\mathfrak{X})$ and
$\|(A_n(s) + \zeta \mathds{1})^{-1}\| \leq ({\rm{Re}} (\zeta))^{-1}$ for ${\rm{Re}}(\zeta) > 0$, and
\begin{equation}\label{eq:1.8.57}
\lim_{n \rightarrow \infty} A_n(s) \, u = A \, u \ ,
\end{equation}
for all $u \in D $ and for bounded $s$. Therefore, by virtue of the Trotter-Neveu-Kato generalised strong convergence
theorem one gets:
\begin{equation}\label{eq:1.8.58}
\lim_{n \rightarrow \infty} e^{- t \, A_n(s)} \, x = U_{A}(t)\, x \ ,
\end{equation}
i.e., the strong and the uniform in $t$ and $s$ convergence (\ref{eq:1.8.58}) of the approximants
$\{e^{- t \, A_n(s)}\}_{n\geq 1}$ for $s \in {(0, s_{0}]}$.
By Lemma \ref{lem:1.8.23} for contraction $C := \Phi(t/n)$ and for $ A_n(s)|_{s=t}$ we obtain
\begin{eqnarray}\label{eq:1.8.59}
&& \|[\Phi(t/n)]^n \, x - e^{- t \, A_n(t)} \, x\| = \|([\Phi(t/n)]^n - e^{{n}(\Phi(t/n)- \mathds{1})}) \ x \| \leq \\
&&\hspace{3.5cm}\leq{2\over n^{2 \delta}}\|x\| + n^{\delta + 1/2}\|(\mathds{1}- \Phi(t/n)) \, x \| \ . \nonumber
\end{eqnarray}
Since for any $u \in D $ and uniformly on $[0,t_0]$ one gets
\begin{equation}\label{eq:1.8.60}
\lim_{n \rightarrow \infty} n^{\delta + 1/2}\|(\mathds{1}- \Phi(t/n)) \, u\| =
\lim_{n \rightarrow \infty} t \, n^{\delta - 1/2}\|A_n(t) \, u\| = 0 \ ,
\end{equation}
for $\delta < 1/2\ $, equations (\ref{eq:1.8.59}) and (\ref{eq:1.8.60}) imply
\begin{equation}\label{eq:1.8.61}
\lim_{n \rightarrow \infty}\|[\Phi(t/n)]^n \, u - e^{- t \, A_n(t)} \, u\| = 0 , \ \  u \in D \ .
\end{equation}
Then (\ref{eq:1.8.58}) and (\ref{eq:1.8.61}) together with estimate $\|[\Phi(t/n)]^n  - e^{- t \, A_n(t)}\| \leq 2 $
yield uniformly in $t\in [0, t_0]$:
\begin{equation*}
\lim_{n \rightarrow \infty} [\Phi(t/n)]^n \, x = U_{A}(t)\, x  \ ,
\end{equation*}
which by density of $D$ is extended to all $x\in \mathfrak{X}$, cf (\ref{eq:1.8.55}).
%\hfill $\square$
\end{proof}
%%%%%%%%%%%%%%%%%%%%%%%%%%%%%%%%%%%%%%%%%%%%%%%%%%%%%%%%%%%%%%%%%%%%%%%%%%%%%%%%%%%%%%%%%%%%%%%%%%%%%%%%%%%%%%%%%%%%%%%%%%%%%%

{{We call (\ref{eq:1.8.55}) the (strong) \textit{Chernoff approximation formula} for the
semigroup $\{U_{A}(t)\}_{t\geq 0}$.}}

%%%%%%%%%%%%%%%%%%%%%%%%%%%%%%%%%%%%%%%%%%%%%%%%%%%%%%% Proposition  %%%%%%%%%%%%%%%%%%%%%%%%%%%%%%%%%%%%%%%%%%%%%%%%%%%%%%%%%
\begin{proposition}\label{prop:1.8.25}{\rm{\cite{Ch}}}{\rm{(Lie-Trotter product formula)}}
Let $A$, $B$ and $C$ be generators of contraction semigroups on $\mathfrak{X}$. Suppose that algebraic sum
\begin{equation}\label{eq:1.8.62}
C u = A u + Bu \ ,
\end{equation}
is valid for all vectors $u$ in a core $D\subset {\rm{dom}} \, C$. Then the semigroup  $\{U_{C}(t)\}_{t\geq 0}$ can be
approximated on $\mathfrak{X}$ in the strong operator topology (\ref{eq:1.8.59}) by the Lie-Trotter product
formula:
\begin{equation}\label{eq:1.8.63}
e^{- t C} \, x = \lim_{n\rightarrow \infty} (e^{- t A/n} e^{- t  B/n} )^n \, x  \ , \  \  \ x\in\mathfrak{X} \ ,
\end{equation}
for all $t\in \mathbb{R}^{+}$ and for $C := \overline{(A + B)}$, which  is the closure of the algebraic sum
(\ref{eq:1.8.62}).
\end{proposition}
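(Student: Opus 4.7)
The plan is to derive the Lie--Trotter formula directly from the strong Chernoff approximation formula of Theorem \ref{lem:1.8.24}. I would set
\[
\Phi(t) := e^{-t A}\, e^{-t B}, \qquad t \geq 0.
\]
This $\Phi$ is a contraction (product of two contractions), $\Phi(0) = \mathds{1}$, and $[\Phi(t/n)]^n$ is exactly the Lie--Trotter approximant $(e^{-tA/n}e^{-tB/n})^n$ appearing in (\ref{eq:1.8.63}). So the Proposition reduces to checking that the hypotheses of Theorem \ref{lem:1.8.24} hold with generator $C$ on the core $D$.

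The only nontrivial verification is that $\Phi$ has strong right-derivative $-C$ at $0$ on $D$. For $u \in D$ the hypothesis (\ref{eq:1.8.62}) gives $u \in \mathrm{dom}(A) \cap \mathrm{dom}(B)$ and $Cu = Au + Bu$. I would split
\[
\frac{1}{t}\bigl(\Phi(t) - \mathds{1}\bigr) u \;=\; e^{-tA}\, \frac{1}{t}\bigl(e^{-tB} u - u\bigr) \;+\; \frac{1}{t}\bigl(e^{-tA} u - u\bigr).
\]
The second term converges strongly to $-Au$ by definition of the generator $A$ on $\mathrm{dom}(A)$. For the first term I would insert the limit $-Bu$ and write
\[
e^{-tA}\, \frac{1}{t}\bigl(e^{-tB}u - u\bigr) + Bu \;=\; e^{-tA}\Bigl[\frac{1}{t}(e^{-tB}u - u) + Bu\Bigr] + \bigl(e^{-tA} - \mathds{1}\bigr) Bu.
\]
Using $\|e^{-tA}\| \leq 1$, the bracket tends to $0$ in norm, and the last term tends to $0$ by strong continuity of the semigroup $\{e^{-tA}\}_{t \geq 0}$ at $t = 0$. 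Hence the first term converges strongly to $-Bu$, and adding both contributions yields $\Phi'(+0)u = -(A+B)u = -Cu$ for every $u \in D$.

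With these ingredients, Theorem \ref{lem:1.8.24}, applied to the contraction-valued function $\Phi$ and the contraction semigroup $\{U_C(t)\}_{t \geq 0}$ whose generator $C$ admits $D$ as a core, immediately gives
\[
\lim_{n \to \infty} [\Phi(t/n)]^n x \;=\; U_C(t) x \;=\; e^{-tC} x,
\]
for every $x \in \mathfrak{X}$ and every $t \in \mathbb{R}^+$, which is exactly (\ref{eq:1.8.63}).

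The proof is essentially mechanical once Theorem \ref{lem:1.8.24} is in hand; the only real step is the derivative computation, and within that the mild obstacle is handling the first term above, since $t \mapsto e^{-tB}u$ is only differentiable at $0$ in the strong sense and one cannot just ``commute'' the limit with $e^{-tA}$ without invoking strong continuity as done in the splitting. No additional estimate or density argument beyond the ``$D$ is a core'' statement (which is absorbed into Theorem \ref{lem:1.8.24}) is required.
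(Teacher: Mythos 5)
Your proof matches the paper's exactly: set $\Phi(t) = e^{-tA}e^{-tB}$, verify that $\Phi'(+0)u = -(A+B)u$ for $u \in D$, and apply Theorem \ref{lem:1.8.24}; your only slip is the sign in front of $(e^{-tA}-\mathds{1})Bu$ in the telescoping identity, which should be negative, though since that term vanishes as $t\to +0$ anyway the conclusion is unaffected. The derivative computation you spell out is precisely what the paper asserts without detail in (\ref{eq:1.8.65}).
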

%%%%%%%%%%%%%%%%%%%%%%%%%%%%%%%%%%%%%%%%%%%%%%%%%%%%%%%%%%%%%%%%%%%%%%%%%%%%%%%%%%%%%%%%%%%%%%%%%%%%%%%%%%%%%%%%%%%%%%
\begin{proof}
Let us define the contraction $\mathbb{R}^{+}\ni t \mapsto \Phi(t)$, $\Phi(0)= \mathds{1}$, by
\begin{equation}\label{eq:1.8.64}
\Phi(t) := e^{- t A} e^{- t B} \ .
\end{equation}
Note that if $u \in D$, then derivative
\begin{equation}\label{eq:1.8.65}
\Phi'(+0)u = \lim_{t\rightarrow +0} \frac{1}{t} ( \Phi(t)-\mathds{1})\ u = -(A + B)\ u \ .
\end{equation}

Now we are in position to apply Theorem \ref{lem:1.8.24}. This yields (\ref{eq:1.8.63}) for
$C := \overline{(A + B)}$.
\end{proof}
%%%%%%%%%%%%%%%%%%%%%%%%%%%%%%%%%%%%%%%%%%%%%%%%%%%%%%%%%%%%%%%%%%%%%%%%%%%%%%%%%%%%%%%%%%%%%%%%%%%%%%%%%%%%%%%%%%%%%%

%%%%%%%%%%%%%%%%%%%%%%%%%%%%%%%%%%%%%%%%%%%%%%%%%%%%% Corollary %%%%%%%%%%%%%%%%%%%%%%%%%%%%%%%%%%%%%%%%%%%%%%%%%%%%%%
\begin{corollary}\label{cor:1.8.26}
Extension of the strong convergent Lie-Trotter product formula of Proposition \ref{prop:1.8.25} to quasi-bounded
and holomorphic semigroups goes through verbatim.
\end{corollary}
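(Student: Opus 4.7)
The plan is to reduce the two extensions to the contraction case already proved in Proposition \ref{prop:1.8.25} by the standard spectral-shift and renorming devices of $C_0$-semigroup theory.

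For quasi-contractive semigroups satisfying $\|e^{-tA}\|,\,\|e^{-tB}\|,\,\|e^{-tC}\|\le e^{\omega t}$, I would introduce the shifted generators $A_\omega:=A+\omega\mathds{1}$, $B_\omega:=B+\omega\mathds{1}$, and $C_{2\omega}:=C+2\omega\mathds{1}$. Then $e^{-tA_\omega}=e^{-\omega t}e^{-tA}$ together with the analogous identities for $B,C$ show that all three shifted semigroups are genuine contractions, while $C_{2\omega}$ is the closure of $A_\omega+B_\omega$ on the same core $D$. Proposition \ref{prop:1.8.25} therefore applies verbatim to give
\[
e^{-tC_{2\omega}}x \;=\; \lim_{n\to\infty}\bigl(e^{-tA_\omega/n}e^{-tB_\omega/n}\bigr)^n x,
\]
and cancelling the scalar factor $e^{-2\omega t}$ from both sides recovers (\ref{eq:1.8.63}) for the original semigroups.

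For the general quasi-bounded case with growth constant $M>1$, and for holomorphic semigroups, I would treat both together: a holomorphic semigroup is uniformly bounded on every closed subsector and hence quasi-bounded on $\mathbb{R}^+$. The Feller renorming $|x|_A:=\sup_{s\ge 0}\|e^{-sA_\omega}x\|$ satisfies $\|x\|\le|x|_A\le M\|x\|$ and makes $e^{-tA_\omega}$ a $|\cdot|_A$-contraction; performing the analogous construction for $B$ and $C$, and observing that strong convergence in an equivalent norm is strong convergence in $\|\cdot\|$, reduces the problem to the quasi-contractive case. Holomorphy only simplifies the verification of the strong right-derivative (\ref{eq:1.8.65}) needed to invoke Theorem \ref{lem:1.8.24}.

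The main obstacle is obtaining a \emph{single} equivalent norm on $\mathfrak{X}$ in which $\Phi(t)=e^{-tA_\omega}e^{-tB_\omega}$ is contractive --- the hypothesis under which Lemma \ref{lem:1.8.23} was stated. When the individual Feller norms for $A$ and $B$ differ, one either invokes a joint Feller--Miyadera--Phillips renorming or slightly loosens the constants in (\ref{eq:1.8.53})--(\ref{eq:1.8.54}) of the proof of Lemma \ref{lem:1.8.23} to accommodate the uniform bound $\|\Phi(t)\|\le M^2$. Either route leaves (\ref{eq:1.8.51}) intact up to a multiplicative factor depending on $M$, so Theorem \ref{lem:1.8.24} and hence Proposition \ref{prop:1.8.25} carry over without structural change.
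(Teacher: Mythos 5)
Your shift argument for the quasi-contractive case is correct and is exactly the standard reduction: if $\|e^{-tA}\|,\|e^{-tB}\|\le e^{\omega t}$, then $A_\omega,B_\omega$ generate genuine contraction semigroups, Proposition~\ref{prop:1.8.25} applies, and the scalar factor $e^{-2\omega t}$ cancels. For this subclass your proof is fine.

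The gap is in the passage to growth constant $M>1$, and you have in fact put your finger on it yourself without resolving it. Neither of the two remedies you offer closes it. First, there is no ``joint Feller--Miyadera--Phillips renorming'' for two noncommuting semigroups: the FMP construction $|x|_A=\sup_{s\ge 0}\|e^{-sA_\omega}x\|$ is tied to a single semigroup, and a norm that simultaneously makes $e^{-tA_\omega}$ and $e^{-tB_\omega}$ contractions exists precisely when the products $\prod_j e^{-t_jA_\omega}e^{-s_jB_\omega}$ are uniformly bounded --- which is the Chernoff stability condition, i.e.\ the very thing that needs to be proved. Second, ``loosening the constants'' in Lemma~\ref{lem:1.8.23} to accommodate $\|\Phi(t)\|\le M^2$ does not merely introduce a multiplicative factor: the proof of Lemma~\ref{lem:1.8.23} uses $\|C\|\le 1$ at two essential points, namely to collapse the telescoping estimate $\|(\mathds{1}-C^{|m-n|})x\|\le |m-n|\,\|(\mathds{1}-C)x\|$ (whose geometric sum otherwise picks up a factor $M^{2(|m-n|-1)}$) and to control $\|C^{n-[\epsilon_n]}\|$, and the Poisson sum then involves $e^{-n}\sum_m \frac{n^m}{m!}\|C^m\|$, which for $\|C\|=M^2>1$ is bounded below by something of order $e^{n(M^2-1)}$. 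So the estimate (\ref{eq:1.8.51}) is not ``left intact up to a factor''; it is destroyed. Indeed without stability even the left-hand side $[\Phi(t/n)]^n$ need not stay bounded as $n\to\infty$, since $\|\Phi(t/n)^n\|\le (M_AM_B)^n$ is the only a priori bound available.

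What is actually needed, and what Chernoff's memoir \cite{Ch} supplies in its general form, is the stability hypothesis $\|\Phi(t)^k\|\le Me^{k\omega t}$ for all $k\in\mathbb{N}$, $t\ge 0$; under this hypothesis one replaces the contraction bound $\|C^j\|\le 1$ in Lemma~\ref{lem:1.8.23} by the stability bound and the argument does go through verbatim. For Lie--Trotter products of quasi-contraction semigroups the stability condition is automatic, which is why your shift reduction works there; for general quasi-bounded semigroups (and for merely bounded holomorphic semigroups with $M>1$) it is an independent assumption that does not follow from the hypotheses of Proposition~\ref{prop:1.8.25}. Your write-up should therefore either restrict to the quasi-contractive case, or state stability as an explicit hypothesis and rerun Lemma~\ref{lem:1.8.23} under it, rather than claim that the constants can simply be loosened.
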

%%%%%%%%%%%%%%%%%%%%%%%%%%%%%%%%%%%%%%%%%%%%%%%%%%%%%%%%%%%%%%%%%%%%%%%%%%%%%%%%%%%%%%%%%%%%%%%%%%%%%%%%%%%%%%%%%%%%%%

%%%%%%%%%%%%%%%%%%%%%%%%%%%%%%%%%%%%%%%%%%%%%%%%%%% Section %%%%%%%%%%%%%%%%%%%%%%%%%%%%%%%%%%%%%%%%%%%%%%%%%%%%%%%%%%
\section{Quasi-sectorial contractions: $(\sqrt[3]{n})^{-1}$-Theorem}
\label{subsec:2.1.2}
%%%%%%%%%%%%%%%%%%%%%%%%%%%%%%%%%%%%%%%%%%%%%%%%%%%%%%%%%%%%%%%%%%%%%%%%%%%%%%%%%%%%%%%%%%%%%%%%%%%%%%%%%%%%%%%%%%
%%%%%%%%%%%%%%%%%%%%%%%%%%%%%%%%%%%%%%%%%%%%%%%%%%%%%%%% Definition %%%%%%%%%%%%%%%%%%%%%%%%%%%%%%%%%%%%%%%%%%%%%%%%%
\begin{definition}\label{def:2.1.2} \cite{CaZ}
A {contraction} $C$ on the Hilbert space $\mathfrak{H}$ is called
\textit{quasi-sectorial} with  semi-angle $\alpha\in [0, \pi/2)$
with respect to the vertex at $z=1$, if its numerical range $W(C)\subseteq D_{\alpha}$.
Here
\begin{equation}\label{eq:2.1.1}
 D_{\alpha}:=\{z\in {\mathbb{C}}: |z|\leq \sin \alpha\} \cup
\{z\in {\mathbb{C}}: |\arg (1-z)|\leq \alpha \ {\rm{and}}\ |z-1|\leq
\cos \alpha \} .
\end{equation}
\end{definition}
%%%%%%%%%%%%%%%%%%%%%%%%%%%%%%%%%%%%%%%%%%%%%%%%%%%%%%%%%%%%%%%%%%%%%%%%%%%%%%%%%%%%%%%%%%%%%%%%%%%%%%%%%%%%%%%%%%%%%

We comment that $D_{\alpha = \pi/2} = {\mathbb{D}}$ (unit disc) and recall that a general contraction $C$ verifies the
weaker condition: $W(C)\subseteq {\mathbb{D}}$.

Note that if operator $C$ is a quasi-sectorial contraction, then $\mathds{1}- C$ is an $m$-sectorial operator with
vertex $z=0$ and semi-angle $\alpha$. Then for $C$ the limits: $\alpha=0$ and $\alpha = \pi/2$,
correspond respectively to self-adjoint and to standard contractions whereas for $\mathds{1}- C$
they give a non-negative self-adjoint and an $m$-accretive (bounded) operators.

The resolvent of an $m$-sectorial operator $A$, with {semi-angle} $\alpha\in [0, \alpha_0]$, $\alpha_0 < \pi/2$,
and vertex at $z=0$, gives an example of the quasi-sectorial contraction.

%%%%%%%%%%%%%%%%%%%%%%%%%%%%%%%%%%%%%%%%%%%%%% Proposition %%%%%%%%%%%%%%%%%%%%%%%%%%%%%%%%%%%%%%%%%%%%%%%%%%%%%%%%%%
\begin{proposition}\label{prop:2.1.10}{\rm{\cite{CaZ, Zag}}}
If $C$ is a quasi-sectorial contraction on a Hilbert space $\mathfrak{H}$ with semi-angle
$0\leq\alpha < \pi/2$, then
\begin{equation}\label{eq:2.1.14}
\|C^n (\mathds{1}-C)\|\leq \frac{K}{n+1} \ , \ n\in{\mathbb{N}} \ .
\end{equation}
\end{proposition}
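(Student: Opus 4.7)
The plan is to represent $C^n(\mathds{1}-C)$ by a Riesz--Dunford contour integral and to exploit the Hilbert-space numerical-range estimate for the resolvent. Concretely, take
\[
C^n(\mathds{1}-C) \;=\; \frac{1}{2\pi i}\oint_{\Gamma_n} z^n (1-z)(z\mathds{1}-C)^{-1}\,dz,
\]
on a contour $\Gamma_n\subset\mathbb{C}\setminus D_\alpha$ winding once around $D_\alpha$. Since $W(C)$ is convex (Toeplitz--Hausdorff) and contained in $D_\alpha$, the standard Hilbert-space bound gives $\bigl\|(z\mathds{1}-C)^{-1}\bigr\|\le 1/\mathrm{dist}(z,\overline{W(C)})\le 1/\mathrm{dist}(z,D_\alpha)$ for $z\notin D_\alpha$, so the whole integrand is controlled by the geometry of $D_\alpha$.

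The contour is chosen to pass at distance $\epsilon_n:=1/(n+1)$ from the vertex $z=1$ but at a fixed positive distance from the rest of $D_\alpha$. Fix $\beta\in(\alpha,\pi/2)$ and take $\Gamma_n$ to be $\partial D_\beta$ with the corner at $z=1$ rounded by an arc of radius $\epsilon_n$. Thus $\Gamma_n$ is the union of (i) the small arc $\{z=1-\epsilon_n e^{i\phi}:\beta\le|\phi|\le\pi\}$ around the vertex, (ii) two ray-segments $\{z=1-te^{\pm i\beta}:t\in[\epsilon_n,\cos\beta]\}$, and (iii) the major arc on $|z|=\sin\beta$ closing the contour around the origin. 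A short check gives $D_\alpha\subset D_\beta$ --- the only non-trivial point being that the part of the sector of $D_\alpha$ with $|1-z|\in(\cos\beta,\cos\alpha]$ falls inside $\{|z|\le\sin\beta\}$, which follows from $\cos\phi>\cos\beta$ for $|\phi|\le\alpha$. Hence $\Gamma_n$ encloses $\sigma(C)$ and the integral representation is valid.

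The estimate then splits into three elementary pieces. On the small arc, $|1-z|=\epsilon_n$, the resolvent is at most $O(1/\epsilon_n)$, the arc length is $O(\epsilon_n)$, and $|z|^n$ stays bounded because $\epsilon_n=O(1/n)$ forces the worst case $(1+\epsilon_n)^n\to e$, giving contribution $O(\epsilon_n)=O(1/n)$. On each ray the factor $|1-z|=t$ cancels the resolvent bound $O(1/t)$, and the inequality $|z|^2\le 1-t\cos\beta$ valid on $[0,\cos\beta]$ yields $|z|^n\le e^{-tn\cos\beta/2}$, so $\int_{\epsilon_n}^{\cos\beta}|z|^n\,dt=O(1/n)$. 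On the major arc $|z|^n=(\sin\beta)^n$ is exponentially small. Summing the three contributions delivers $\|C^n(\mathds{1}-C)\|\le K/(n+1)$ with $K$ depending only on $\alpha$.

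The main obstacle is the geometric bookkeeping: verifying the inclusion $D_\alpha\subset D_\beta$ just mentioned, and establishing the correct lower bounds for $\mathrm{dist}(z,D_\alpha)$ on each portion of $\Gamma_n$ --- of order $\epsilon_n$ on the small arc (the nearest point of $D_\alpha$ being on the boundary ray of the sector), of order $t$ on the rays (again the sector is closer than the disk), and bounded below by a positive constant on the major arc. Once these geometric lower bounds are in place, the three integrals reduce to routine one-variable estimates.
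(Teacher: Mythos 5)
The paper does not actually prove Proposition~\ref{prop:2.1.10}; it is quoted from \cite{CaZ,Zag} without argument. So the comparison is between your blind proof and what is standard in the literature rather than a proof given in this text.

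Your argument is correct, and it is the natural one for a Ritt-type estimate: represent $C^n(\mathds{1}-C)$ by a Riesz--Dunford integral, use the Hilbert-space numerical-range bound $\|(z\mathds{1}-C)^{-1}\|\le 1/\mathrm{dist}(z,\overline{W(C)})$, and choose an $n$-dependent contour whose distance to the vertex $z=1$ scales like $1/n$. The essential points all check out. The inclusion $D_\alpha\subset D_\beta$ for $\alpha<\beta<\pi/2$ does hold (for $z=1-re^{i\phi}$ with $|\phi|\le\alpha$ and $r\le\cos\alpha$, the quadratic $r^2-2r\cos\phi+\cos^2\beta\le 0$ is satisfied precisely because $r\le\cos\phi$ and $r>\cos\phi-\sqrt{\cos^2\phi-\cos^2\beta}$, the latter root being $\le\cos\beta$). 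The small arc bulges to the right of $z=1$, so the enclosed region is $D_\beta\cup\{|z-1|<\epsilon_n\}\supset D_\alpha\supseteq\sigma(C)$, and the Dunford integral is legitimate; on that arc $\mathrm{dist}(z,D_\alpha)\ge\epsilon_n\sin(\beta-\alpha)$ and $|z|^n\le(1+\epsilon_n)^n\le e$, giving $O(\epsilon_n)$. On the ray $|z|^2=1-2t\cos\beta+t^2\le 1-t\cos\beta$ for $t\le\cos\beta$ is correct, the factor $|1-z|=t$ cancels the $1/(t\sin(\beta-\alpha))$ resolvent bound exactly, and $\int_0^\infty e^{-tn\cos\beta/2}\,dt=2/(n\cos\beta)$. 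The major arc is exponentially negligible. All three pieces are $O(1/(n+1))$ with a constant depending only on $\alpha$ (via the choice of $\beta$), which is exactly \eqref{eq:2.1.14}. One detail worth making explicit in a final write-up: the resolvent bound through the numerical range is genuinely a Hilbert-space fact (Toeplitz--Hausdorff plus the standard estimate), so this is where the Hilbert-space hypothesis enters; in a Banach space the proposition is false in this generality. With that remark the proof is complete and, to the best of my knowledge, follows the same contour-integral route as the cited sources.
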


%%%%%%%%%%%%%%%%%%%%%%%%%%%%%%%%%%%%%%%%%%%%%%%%%%%%%%%%%%%%%%%%%%%%%%%%%%%%%%%%%%%%%%%%%%%%%%%%%%%%%%%%%%%%%%%%%%%%%
The property (\ref{eq:2.1.14}) implies that the quasi-sectorial contractions belong to the class of the so-called
{\textit{Ritt operators}} \cite{Ri}. This allows to go beyond the $\sqrt[3]{n}\ $-Lemma \ref{lem:1.8.23}
to the $(\sqrt[3]{n})^{-1}$-Theorem.

%%%%%%%%%%%%%%%%%%%%%%%%%%%%%%%%%%%%%%%%%%%%%% Theorem %%%%%%%%%%%%%%%%%%%%%%%%%%%%%%%%%%%%%%%%%%%%%%%%%%%%%%%%%%%%
\begin{theorem}\label{th:6.2.2}{\rm{($(\sqrt[3]{n})^{-1}$-Theorem)}}
Let $C$ be a quasi-sectorial contraction on $\mathfrak{H}$ with
numerical range $W(C)\subseteq D_\alpha$, $0\leq \alpha <\pi/2$. Then
\begin{equation}\label{eq:6.2.5}
\left\|C^n - e^{n(C-\mathds{1})}\right\| \leq {M\over n^{1/3}} \ , \ \ n=1,2,3,\dots
\end{equation}
where $M=2K+2$ and $K$ is defined by (\ref{eq:2.1.14}).
\end{theorem}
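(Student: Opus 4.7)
The plan is to re-run the proof of the $\sqrt[3]{n}$-Lemma~\ref{lem:1.8.23} almost verbatim, but to upgrade the strong-topology estimate into an operator-norm one by exploiting Proposition~\ref{prop:2.1.10}, which provides the operator-norm bound $\|C^j(\mathds{1}-C)\|\leq K/(j+1)$ in place of the trivial contraction bound $\|\mathds{1}-C\|\leq 2$ used there.

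Starting from the representation
\begin{equation*}
C^n - e^{n(C-\mathds{1})} = e^{-n}\sum_{m=0}^\infty \frac{n^m}{m!}(C^n - C^m),
\end{equation*}
I would split the sum into a \emph{central} part $|m-n|\leq\epsilon_n$ and \emph{tails} $|m-n|>\epsilon_n$, with $\epsilon_n$ to be optimized. In the central part, the same factorization as in Lemma~\ref{lem:1.8.23} gives
\begin{equation*}
\|C^n - C^m\| \leq |m-n|\,\|C^{n-[\epsilon_n]}(\mathds{1}-C)\|,
\end{equation*}
which is legitimate because the central condition guarantees $m\geq n-[\epsilon_n]$, so the shift by $[\epsilon_n]$ powers of $C$ is admissible on both sides of $C^n - C^m$. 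Applying Proposition~\ref{prop:2.1.10} to the shifted factor and summing over the central indices with Poisson weight $e^{-n} n^m/m!$ yields a central operator-norm contribution of order $K\epsilon_n/(n-[\epsilon_n]+1)$.

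For the tails, retain the crude bound $\|C^n-C^m\|\leq 2$ combined with Tchebychev's inequality exactly as in Lemma~\ref{lem:1.8.23}, producing a tail contribution of order $2n/\epsilon_n^2$. Combining the two estimates,
\begin{equation*}
\|C^n - e^{n(C-\mathds{1})}\| \leq \frac{K\epsilon_n}{n-[\epsilon_n]+1} + \frac{2n}{\epsilon_n^2},
\end{equation*}
the choice $\epsilon_n = n^{2/3}$ (the $\delta = 1/6$ case in the parametrization of Lemma~\ref{lem:1.8.23}) balances both terms at order $n^{-1/3}$. A short computation, using $n/(n-n^{2/3}+1)\leq 2$ for $n$ not too small, then gathers the constants into $M = 2K+2$, with any exceptional small values of $n$ absorbed by enlarging $M$ if necessary.

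The key technical point is precisely the swap at the centre of the argument: Proposition~\ref{prop:2.1.10} converts a factor of size $O(1)$ (the norm of $\mathds{1}-C$) into a factor of size $O(1/n)$, and this gain of one power of $n$ shifts the optimal scale of $\epsilon_n$ from $n^{1/3}$ (as in Lemma~\ref{lem:1.8.23}) up to $n^{2/3}$, thereby turning the $n^{1/3}$ \emph{divergence} of the earlier estimate into an $n^{-1/3}$ \emph{decay}. Aside from this single upgrade and the need to verify that the shifted factorization remains valid in the central region, the proof is structurally identical to that of the $\sqrt[3]{n}$-Lemma.
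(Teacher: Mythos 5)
Your proof is correct and follows essentially the same route as the paper: split the Poisson representation into a central window $|m-n|\le\epsilon_n$ and tails, upgrade the central factor via the Ritt-type bound $\|C^{n-[\epsilon_n]}(\mathds{1}-C)\|\le K/(n-[\epsilon_n]+1)$ from Proposition~\ref{prop:2.1.10}, control the tails by Tchebychev, and balance the two contributions with $\epsilon_n=n^{2/3}$ (i.e.\ $\delta=1/6$) to get $(2K+2)/n^{1/3}$. One small remark: your tail argument, which bounds $\|C^n-C^m\|\le 2$ and then multiplies by the tail probability $\mathbb{P}\{|X_n-n|>\epsilon_n\}\le n/\epsilon_n^2$, is actually a cleaner (and strictly correct) instance of Tchebychev than the truncated-first-moment form invoked in~(\ref{eq:1.8.53}), while yielding the same $2n/\epsilon_n^2$ contribution; the paper's proof of the theorem reuses~(\ref{eq:1.8.53}) verbatim with $\|(\mathds{1}-C)x\|$ replaced by $\|\mathds{1}-C\|\le 2$.
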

%%%%%%%%%%%%%%%%%%%%%%%%%%%%%%%%%%%%%%%%%%%%%%%%%%%%%%%%%%%%%%%%%%%%%%%%%%%%%%%%%%%%%%%%%%%%%%%%%%%%%%%%%%%%%%%%%%%%%%
\begin{proof}
Note that with help of inequality (\ref{eq:2.1.14}) we can improve the
estimate (\ref{eq:1.8.54}) in Lemma \ref{lem:1.8.23}:
\begin{eqnarray*}
\|C^n-C^m\| \leq  |m-n| \|C^{n-[\epsilon_n]}(\mathds{1}-C)\|
\leq \epsilon_n \ \frac{K}{n-[\epsilon_n]+1} \ ,
\end{eqnarray*}
for $\epsilon_n = n^{\delta + 1/2}$. Then for $\delta < 1/2$ there the above
inequality together with (\ref{eq:1.8.53}) give instead of (\ref{eq:1.8.51}) (or (\ref{eq:2.1.10})) the
operator-norm estimate
\begin{equation}\label{eq:2.1.15}
\left\|C^n - e^{n(C-\mathds{1})}\right\| \leq \frac{2}{n^{2\delta}} + \frac{2 K}{n^{1/2 -\delta}}
\ \  , \ \ n \in \mathbb{N} \ .
\end{equation}
Then the estimate ${M}/{n^{1/3}}$ of the Theorem  \ref{th:6.2.2} results from the optimal choice of the value:
$\delta = 1/6 $, in (\ref{eq:2.1.15}).
\end{proof}
%%%%%%%%%%%%%%%%%%%%%%%%%%%%%%%%%%%%%%%%%%%%%%%%%%%%%%%%%%%%%%%%%%%%%%%%%%%%%%%%%%%%%%%%%%%%%%%%%%%%%%%%%%%%%%%%%%

Similar to $(\sqrt[3]{n})$-Lemma, the $(\sqrt[3]{n})^{-1}$-Theorem is the first step in developing the
\textit{operator-norm} approximation formula \`{a} la Chernoff. To this end one needs an operator-norm analogue of
Theorem \ref{lem:1.8.24}.
Since the last includes the Trotter-Neveu-Kato strong convergence theorem, we need the operator-norm
extension of this assertion to quasi-sectorial contractions.
%%%%%%%%%%%%%%%%%%%%%%%%%%%%%%%%%%%%%%%%%%%%%%%%%%% Lemma %%%%%%%%%%%%%%%%%%%%%%%%%%%%%%%%%%%%%%%%%%%%%%%%%%%%%%%%%%
\begin{proposition}\label{lem:6.3.1}{\rm{\cite{CaZ}}}
Let $\{X(s)\}_{s>0}$ be a family of m-sectorial operators in a Hilbert space $\mathfrak{H}$ with
$W(X(s))\subseteq S_\alpha$ for some $0< \alpha <\pi/2$ and for all $s>0$. Let $X_0$ be an
$m$-sectorial operator defined in a closed subspace ${\mathfrak{H}}_0 \subseteq {\mathfrak{H}}$, with
$W(X_0)\subseteq S_\alpha$. Then the two following assertions are equivalent:
\begin{eqnarray*}
(a) & & \lim_{s\rightarrow +0} \left\|(\zeta \mathds{1} +X(s))^{-1} -
(\zeta \mathds{1} +X_0)^{-1}P_0\right\| = 0 \ , \ \mbox{ for } \ \zeta\in S_{\pi-\alpha} \ ,  \\
(b) & & \lim_{s\rightarrow +0} \left\|e^{-tX(s)} - e^{-tX_0}P_0\right\| = 0 \ , \
\mbox{ for } \ t> 0 \ .
\end{eqnarray*}
Here $P_0$ denotes the orthogonal projection from $\mathfrak{H}$ onto ${\mathfrak{H}}_0$.
\end{proposition}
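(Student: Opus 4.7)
The plan is to connect the semigroups and resolvents through the two standard transforms: the Laplace transform for (b)$\Rightarrow$(a), and the Dunford--Taylor contour integral for (a)$\Rightarrow$(b). The crucial structural input is the common bound $W(X(s))\cup W(X_0)\subseteq S_\alpha$ with a \emph{single} semi-angle $\alpha<\pi/2$: it yields a uniform sectorial estimate $\|(\zeta\mathds{1}+X(s))^{-1}\|\leq C(\alpha)/|\zeta|$ on $S_{\pi-\alpha}$, an analogous bound for $X_0$, and a single contour $\Gamma\subset S_{\pi-\alpha}$ simultaneously admissible for every $s>0$.

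For (b)$\Rightarrow$(a), I would use the Laplace representation
$$(\zeta\mathds{1}+X)^{-1}=\int_0^{\infty}e^{-t\zeta}\,e^{-tX}\,dt,$$
valid for $\mathrm{Re}\,\zeta>0$ and extendable to all $\zeta\in S_{\pi-\alpha}$ by rotating the contour, using the holomorphic extension of the contraction semigroup to a sector of opening $\pi/2-\alpha$. Subtracting the analogous identity for $X_0P_0$, the contractivity $\|e^{-tX(s)}\|,\|e^{-tX_0}P_0\|\leq 1$ together with the pointwise operator-norm convergence in (b) permits dominated convergence in $t$ and gives norm-resolvent convergence at any $\zeta$ with $\mathrm{Re}\,\zeta>0$; the first resolvent identity combined with the uniform sectorial bound then propagates this to every $\zeta\in S_{\pi-\alpha}$.

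For the substantive direction (a)$\Rightarrow$(b) I would write
$$e^{-tX(s)}-e^{-tX_0}P_0 \;=\; \frac{1}{2\pi i}\int_{\Gamma} e^{t\zeta}\bigl[(\zeta\mathds{1}+X(s))^{-1}-(\zeta\mathds{1}+X_0)^{-1}P_0\bigr]d\zeta,$$
where $\Gamma$ is the boundary of $-S_\beta$ for some $\beta\in(\alpha,\pi/2)$, rounded off by a small circular arc of radius $r_0$ about the origin and oriented so as to enclose the ``forbidden'' sector $-\overline{S_\alpha}$. Hypothesis (a) delivers pointwise-in-$\zeta$ norm convergence of the bracketed integrand to zero. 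On the unbounded rays $\zeta=re^{\pm i(\pi-\beta)}$ we have $|e^{t\zeta}|=e^{-tr\cos\beta}$ with $\cos\beta>0$, and the uniform sectorial bound $C(\alpha)/r$ on each resolvent furnishes an $s$-independent integrable dominant; on the compact arc $|\zeta|=r_0$ the integrand is uniformly bounded and pointwise norm convergence suffices. Dominated convergence then delivers (b).

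The main technical obstacle is precisely this passage to the limit under the contour integral. It requires the uniform sectorial resolvent estimate with one constant $C(\alpha)$ valid for all $s>0$ and for $X_0$, which is what the common numerical-range condition supplies, and it requires the choice $\beta<\pi/2$ so that the exponential factor $e^{t\zeta}$ strictly dominates the $1/|\zeta|$ resolvent growth on the non-compact rays of $\Gamma$. A secondary bookkeeping point is that the Dunford--Taylor formula for $X_0$ lives a priori on ${\mathfrak{H}}_0$: inserting $P_0$ under the integral sign (equivalently, extending $(\zeta\mathds{1}+X_0)^{-1}$ by zero on ${\mathfrak{H}}_0^\perp$) yields an identity for $e^{-tX_0}P_0$ on the full space $\mathfrak{H}$, which is exactly what is paired with $e^{-tX(s)}$ in both (a) and (b).
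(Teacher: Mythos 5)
The paper does not present its own proof of this proposition; it is stated as a citation to Cachia--Zagrebnov (reference \cite{CaZ}), so there is no internal argument to compare against. Your proposal is correct and is in fact the standard route taken in that source: the Laplace transform of the semigroup passes operator-norm semigroup convergence to norm-resolvent convergence by dominated convergence (using contractivity as the uniform dominant), while the Dunford--Taylor contour integral over the boundary of $-S_\beta$ with $\alpha<\beta<\pi/2$, rounded off near the origin, passes norm-resolvent convergence back to the semigroups, with the uniform sectoriality $\|(\zeta\mathds{1}+X(s))^{-1}\|\le C/|\zeta|$ (where $C$ depends on $\alpha$ and the contour angle $\beta$) together with $|e^{t\zeta}|=e^{-tr\cos\beta}$ providing the $s$-independent integrable dominant on the rays. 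Your bookkeeping remark that one works with $(\zeta\mathds{1}+X_0)^{-1}P_0$ and $e^{-tX_0}P_0$ as operators on all of $\mathfrak{H}$ is the right way to reconcile the two spaces. One small point worth making explicit in the $(b)\Rightarrow(a)$ step: after obtaining convergence for $\mathrm{Re}\,\zeta>0$, the extension to all of $S_{\pi-\alpha}$ is cleanest via a Vitali-type argument for operator-valued holomorphic families (uniform local boundedness of $(\zeta\mathds{1}+X(s))^{-1}$ on compacts of $S_{\pi-\alpha}$ plus convergence on an open subset), rather than iterating the first resolvent identity, which only reaches a neighbourhood at each step.
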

%%%%%%%%%%%%%%%%%%%%%%%%%%%%%%%%%%%%%%%%%%%%%%%%%%%%%%%%%%%%%%%%%%%%%%%%%%%%%%%%%%%%%%%%%%%%%%%%%%%%%%%%%%%%%%%%%%%%%%

Now $(\sqrt[3]{n})^{-1}$-Theorem \ref{th:6.2.2} and Proposition \ref{lem:6.3.1} yield a desired generalisation of the
operator-norm approximation formula:
%%%%%%%%%%%%%%%%%%%%%%%%%%%%%%%%%%%%%%%%%%%%% Proposition %%%%%%%%%%%%%%%%%%%%%%%%%%%%%%%%%%%%%%%%%%%%%%%%%%%%%%%%%%%%
\begin{proposition}\label{prop:2.1.12}{\rm{\cite{CaZ}}}
Let $\{\Phi(s)\}_{s\geq 0}$ be a family of uniformly
quasi-sectorial contractions on a Hilbert space $\mathfrak{H}$, i.e. such that there exists
$0 \leq \alpha<\pi/2$ and  $W(\Phi(s)) \subseteq D_\alpha$, for all $s\geq 0$. Let
\begin{equation}\label{eq:2.1.16}
X(s):=(\mathds{1}-\Phi(s))/s  \ ,
\end{equation}
and let $X_0$ be a closed operator with non-empty resolvent set, defined in a closed subspace
${\mathfrak{H}}_0 \subseteq \mathfrak{H}$. Then the family $\{X(s)\}_{s>0}$ converges,
when $s\rightarrow +0$, in the uniform resolvent sense to the operator $X_0$  if and only if
\begin{equation}\label{eq:2.1.17}
\lim_{n\rightarrow \infty} \left\|\Phi(t/n)^n -e^{-tX_0}P_0\right\| = 0 \ , \ \ \ \mbox{for} \ t>0 \ .
\end{equation}
Here $P_0$ denotes the orthogonal projection onto the subspace ${\mathfrak{H}}_0$.
\end{proposition}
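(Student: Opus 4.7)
The plan is to chain together the two main results already established in this section—the $(\sqrt[3]{n})^{-1}$-Theorem \ref{th:6.2.2} and the operator-norm Trotter--Neveu--Kato equivalence of Proposition \ref{lem:6.3.1}—via the algebraic identity
\[
e^{n(\Phi(t/n) - \mathds{1})} = e^{-t\, X(t/n)},
\]
which rewrites the Chernoff exponent $n(\Phi(t/n)-\mathds{1})$ as $-t\cdot(\mathds{1}-\Phi(t/n))/(t/n)$ and makes (\ref{eq:2.1.17}) directly comparable with the semigroup convergence appearing in Proposition \ref{lem:6.3.1}(b).

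First I would check that Proposition \ref{lem:6.3.1} is applicable to the family $\{X(s)\}_{s>0}$: the uniform quasi-sectoriality $W(\Phi(s))\subseteq D_\alpha$ implies that $\mathds{1}-\Phi(s)$ is m-sectorial with numerical range in the sector of vertex $0$ and semi-angle $\alpha$; dividing by $s>0$ preserves this sector, so $W(X(s))\subseteq S_\alpha$ uniformly in $s$. Then from the triangle inequality
\[
\|\Phi(t/n)^n - e^{-tX_0}P_0\| \leq \|\Phi(t/n)^n - e^{-tX(t/n)}\| + \|e^{-tX(t/n)} - e^{-tX_0}P_0\|,
\]
Theorem \ref{th:6.2.2} applied to the quasi-sectorial contraction $C:=\Phi(t/n)$ controls the first summand by $M/n^{1/3}\to 0$.

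For the forward (\textit{only if}) direction, assume $X(s)\to X_0$ in the uniform resolvent sense. Proposition \ref{lem:6.3.1} then yields $\|e^{-tX(s)}-e^{-tX_0}P_0\|\to 0$ as $s\to +0$; specializing to $s=t/n$ and combining with the $M/n^{1/3}$ bound immediately gives (\ref{eq:2.1.17}).

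For the reverse (\textit{if}) direction, (\ref{eq:2.1.17}) combined with Theorem \ref{th:6.2.2} delivers $\|e^{-tX(t/n)} - e^{-tX_0}P_0\|\to 0$, i.e.\ semigroup norm convergence along the \textit{discrete} sequence $s_n=t/n$. I expect the main obstacle to be upgrading this to the continuous convergence $s\to +0$ required by Proposition \ref{lem:6.3.1}(b). This should be achievable by exploiting uniform quasi-sectoriality, which makes $\{X(s)\}_{s>0}$ a uniformly m-sectorial family with uniformly holomorphic and equicontinuous associated semigroups; together with the fact that varying $t$ over any interval $I\subset(0,\infty)$ renders the set $\{t/n: t\in I,\ n\in\mathbb{N}\}$ dense near $0$, this allows one to interpolate between consecutive discrete values $t/(n+1)<s\leq t/n$. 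Once the continuous version of the semigroup convergence is secured, Proposition \ref{lem:6.3.1} yields the uniform resolvent convergence of $X(s)$ to $X_0$, completing the equivalence.
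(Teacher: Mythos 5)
Your overall strategy matches exactly what the paper intends: the paper offers no proof of Proposition~\ref{prop:2.1.12}, stating only that it is ``yielded'' by the $(\sqrt[3]{n})^{-1}$-Theorem~\ref{th:6.2.2} together with Proposition~\ref{lem:6.3.1}, and your triangle-inequality chain through the identity $e^{n(\Phi(t/n)-\mathds{1})}=e^{-tX(t/n)}$ is precisely that. Your forward (``only if'') direction is complete and correct: resolvent convergence gives Proposition~\ref{lem:6.3.1}(b), specialise to $s=t/n$, add the $M/n^{1/3}$ Chernoff bound, done.

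For the reverse (``if'') direction, you have correctly isolated the one nontrivial point, but your sketch for closing it is thinner than it looks. From (\ref{eq:2.1.17}) and Theorem~\ref{th:6.2.2} you obtain, for each \emph{fixed} $t>0$, $\|e^{-tX(t/n)}-e^{-tX_0}P_0\|\to 0$ as $n\to\infty$, and you want $\|e^{-\tau X(s)}-e^{-\tau X_0}P_0\|\to 0$ as $s\to 0^+$. The natural move is to set $n(s)=\lfloor\tau/s\rfloor$, $t(s)=n(s)\,s\to\tau$, so that $s=t(s)/n(s)$; uniform sectoriality does give the equi-Lipschitz bound $\|e^{-\tau X(s)}-e^{-t(s)X(s)}\|\le C|\tau-t(s)|/\tau\to 0$, so the first and third terms of the triangle split vanish. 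But the middle term is then $\|e^{-t(s)X(t(s)/n(s))}-e^{-t(s)X_0}P_0\|$ with $t(s)$ \emph{drifting} toward $\tau$, and the pointwise-in-$t$ hypothesis does not control a diagonal sequence with $t$ and $n$ both varying. To make this rigorous you need the convergence in (\ref{eq:2.1.17}) to be locally uniform in $t$ (equivalently, to show that the functions $n\mapsto\|e^{-tX(t/n)}-e^{-tX_0}P_0\|$ form an equicontinuous-in-$t$ family, which again uses the uniform bound $\|X(s)e^{-\tau X(s)}\|\le C/\tau$), or else to invoke continuity of $s\mapsto\Phi(s)$, which is not among the stated hypotheses. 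So the obstacle you flag is real, and ``density of $\{t/n\}$'' plus equicontinuity of the semigroups in $\tau$ alone does not finish it; you also need to propagate an equicontinuity/uniformity in the $t$-variable before Proposition~\ref{lem:6.3.1}(b)$\Rightarrow$(a) applies.
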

%%%%%%%%%%%%%%%%%%%%%%%%%%%%%%%%%%%%%%%%%%%%%%%%%%%%%%%%%%%%%%%%%%%%%%%%%%%%%%%%%%%%%%%%%%%%%%%%%%%%%%%%%%%%%%%%%%%%%

Let $A$ be an $m$-sectorial operator with semi-angle $0< \alpha<\pi/2$ and with vertex at $0$, which means that
numerical range $W(A)\subseteq S_\alpha = \{z\in \mathbb{C}: |\arg(z)| \leq \alpha\}$.
Then $\{\Phi(t) := (\mathds{1}+tA)^{-1}\}_{t\geq 0}$ is the family of quasi-sectorial contractions, i.e.
$W(\Phi(t))\subseteq D_\alpha$. Let $X(s):= (\mathds{1}-\Phi(s))/s$, $s>0$, and $X_0:=A$. Then $X(s)$ converges,
when $s\rightarrow +0$, to $X_0$ in the uniform resolvent sense with the asymptotic
\begin{equation*}
\|(\zeta \mathds{1} +X(s))^{-1} - (\zeta \mathds{1} +X_0)^{-1}\| = s\left\| {A\over\zeta \mathds{1} +A+
\zeta sA}\cdot{A\over\zeta \mathds{1} +A}\right\| = O(s),
\end{equation*}
for any $\zeta\in S_{\pi-\alpha}$, since we have the estimate:
%\begin{equation}\label{cond1'}
\begin{equation*}
\left\| {A\over\zeta \mathds{1} +A+\zeta sA}\cdot{A\over\zeta \mathds{1} +A}\right\| \leq \left(1+
{|\zeta|\over \mbox{dist}\left({\zeta (1+s\zeta)^{-1}},-S_\alpha\right)}\right)
\left(1+{|\zeta|\over \mbox{dist} (\zeta,-S_\alpha)}\right) \ .
\end{equation*}

Therefore, the family $\{\Phi(t)\}_{t\geq 0}$ satisfies the conditions of Proposition \ref{prop:2.1.12}. This
implies the operator-norm approximation of the exponential function , i.e. the semigroup for $m$-sectorial
generator, by the powers of resolvent (\textit{the Euler approximation formula}):
%%%%%%%%%%%%%%%%%%%%%%%%%%%%%%%%%%%%%%%%%% Corollary %%%%%%%%%%%%%%%%%%%%%%%%%%%%%%%%%%%%%%%%%%%%%%%%%%%%%%%%%%%%%
\begin{corollary}\label{th:6.4.1}
If $A$ is an $m$-sectorial operator in a Hilbert space $\mathfrak{H}$, with semi-angle $\alpha\in (0,\pi/2)$ and
with vertex at $0$, then
\begin{equation}\label{eq:E}
\lim_{n\rightarrow\infty}\left\|(\mathds{1}+tA/n)^{-n} - e^{-tA}\right\| = 0\ , \ \ t\in S_{\pi/2-\alpha} \ .
\end{equation}
%Moreover, if $0$ belongs to the resolvent set of $A$, then uniformly in
%$t\geq 0$ one has the error estimate:
%\begin{equation}\label{eq:6.4.3}
%\left\|(\mathds{1}+tA/n)^{-n} - e^{-tA}\right\| \leq O\left(\frac{\ln n}{n}\right).
%\end{equation}
\end{corollary}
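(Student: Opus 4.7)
The plan is to apply Proposition \ref{prop:2.1.12} with the Yosida-type approximant $\Phi(t):=(\mathds{1}+tA)^{-1}$, exactly as set up in the paragraph preceding the statement. First, I would verify the hypotheses of that proposition: since $A$ is $m$-sectorial with vertex $0$ and semi-angle $\alpha<\pi/2$, the resolvents $(\mathds{1}+tA)^{-1}$ are quasi-sectorial contractions with $W(\Phi(t))\subseteq D_\alpha$ for every $t\geq 0$ (the example mentioned just before Proposition \ref{prop:2.1.10}). Hence the family $\{\Phi(s)\}_{s\geq 0}$ is uniformly quasi-sectorial with the common semi-angle $\alpha$.

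Next, identify $X(s):=(\mathds{1}-\Phi(s))/s=A(\mathds{1}+sA)^{-1}$ and $X_0:=A$, taking $\mathfrak{H}_0=\mathfrak{H}$ so that $P_0=\mathds{1}$, and establish that $X(s)\to X_0$ in the uniform resolvent sense on $S_{\pi-\alpha}$ as $s\to+0$. This is essentially computed already in the excerpt: for $\zeta\in S_{\pi-\alpha}$ the algebraic rearrangement
$$(\zeta\mathds{1}+X(s))^{-1}-(\zeta\mathds{1}+A)^{-1}=s\,\frac{A}{\zeta\mathds{1}+A+\zeta sA}\cdot\frac{A}{\zeta\mathds{1}+A},$$
combined with the displayed bound in terms of $\mathrm{dist}(\zeta,-S_\alpha)$, gives $O(s)$ as $s\to 0^+$. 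Proposition \ref{prop:2.1.12} then yields $\|(\mathds{1}+tA/n)^{-n}-e^{-tA}\|\to 0$ for every real $t>0$, which is (\ref{eq:E}) along the positive axis.

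To reach the full sector $S_{\pi/2-\alpha}$ I would finish with a rotation argument. For $t=re^{i\theta}$ with $r>0$ and $|\theta|<\pi/2-\alpha$, the rotated operator $A_\theta:=e^{i\theta}A$ is $m$-sectorial with vertex $0$ and semi-angle $\alpha+|\theta|$, still strictly below $\pi/2$ by assumption. The real-parameter statement already proved then applies to $A_\theta$ and gives $\|(\mathds{1}+rA_\theta/n)^{-n}-e^{-rA_\theta}\|\to 0$, which is precisely (\ref{eq:E}) at the complex parameter $t=re^{i\theta}$.

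The main obstacle I anticipate is the bookkeeping in this last rotation step: one must confirm that the constant $K$ in (\ref{eq:2.1.14}) and the constant in the $O(s)$ resolvent estimate above remain under uniform control when $A$ is replaced by $A_\theta$, so that the rate of uniform resolvent convergence (and hence the conclusion of Proposition \ref{prop:2.1.12}) does not degenerate as $|\theta|\nearrow\pi/2-\alpha$. On compact subsectors of $S_{\pi/2-\alpha}$ this should be straightforward, since the effective semi-angle $\alpha+|\theta|$ is bounded away from $\pi/2$ and $\mathrm{dist}(\zeta,-S_{\alpha+|\theta|})$ stays positive for the relevant $\zeta$.
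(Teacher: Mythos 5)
Your proposal follows the paper's own route for real $t>0$: take $\Phi(s)=(\mathds{1}+sA)^{-1}$, verify that these are uniformly quasi\mbox{-}sectorial with $W(\Phi(s))\subseteq D_\alpha$, compute the resolvent difference to establish uniform resolvent convergence of $X(s)=A(\mathds{1}+sA)^{-1}$ to $A$ at rate $O(s)$, and invoke Proposition~\ref{prop:2.1.12} with $\mathfrak{H}_0=\mathfrak{H}$, $P_0=\mathds{1}$. That is exactly the argument laid out in the paragraph preceding the Corollary.

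Where you go beyond the paper is the explicit rotation step for complex $t$. The paper asserts the conclusion for $t\in S_{\pi/2-\alpha}$, but Proposition~\ref{prop:2.1.12} is stated only for real $t>0$, and the paper does not say how the sector is reached. Your observation that $A_\theta:=e^{i\theta}A$ has $W(A_\theta)=e^{i\theta}W(A)\subseteq S_{\alpha+|\theta|}$ with $\alpha+|\theta|<\pi/2$, that $A_\theta$ is again $m$-sectorial with vertex $0$ (its resolvent set still contains $\mathbb{C}\setminus\overline{S_{\alpha+|\theta|}}$), and that applying the real-parameter result to $A_\theta$ reproduces (\ref{eq:E}) at $t=re^{i\theta}$, correctly and cleanly closes this gap. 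Your final worry about uniform control of the constants as $|\theta|\nearrow\pi/2-\alpha$ is not actually an obstacle here: (\ref{eq:E}) is a pointwise statement for each fixed $t$, so for each fixed $t=re^{i\theta}$ the effective semi-angle $\alpha+|\theta|$ is a fixed number strictly below $\pi/2$ and all constants are finite. Uniformity in $t$ would only matter on compact subsectors, where, as you note, it holds because $\alpha+|\theta|$ stays bounded away from $\pi/2$.
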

%%%%%%%%%%%%%%%%%%%%%%%%%%%%%%%%%%%%%%%%%%%%%%%%%%%%%%%%%%%%%%%%%%%%%%%%%%%%%%%%%%%%%%%%%%%%%%%%%%%%%%%%%%%%%%%%%%%%%
%%%%%%%%%%%%%%%%%%%%%%%%%%%%%%%%%%%%%%%%%%%%%%%%%%% Section %%%%%%%%%%%%%%%%%%%%%%%%%%%%%%%%%%%%%%%%%%%%%%%%%%%%%%%%%
\section{Conclusion}
%%%%%%%%%%%%%%%%%%%%%%%%%%%%%%%%%%%%%%%%%%%%%%%%%%%%%%%%%%%%%%%%%%%%%%%%%%%%%%%%%%%%%%%%%%%%%%%%%%%%%%%%%%%%%%%%%%

Summarising we note that for the {quasi-sectorial} contractions instead of \textit{divergent} Chernoff's estimate
(\ref{eq:2.1.10})  we find the estimate (\ref{eq:2.1.15}), which converges for $n \rightarrow \infty$ to zero in the
\textit{operator-norm} topology.
Note that the rate $O(1/n^{1/3})$ of this convergence is obtained with help of the
{Poisson representation} and the {Tchebychev inequality} in the spirit of the proof of Lemma \ref{lem:1.8.23},
and that it is not optimal.

The estimate ${M}/{n^{1/3}}$ in the $(\sqrt[3]{n})^{-1}$-Theorem \ref{th:6.2.2} can be improved by a more
refined lines of reasoning.

For example, by scrutinising our probabilistic arguments one can find a more
precise Tchebychev-type bound for the tail probabilities. This improves the estimate (\ref{eq:2.1.15}) to
the rate $O(\sqrt{\ln(n)/n})$, see \cite{Pa}.

On the other hand, a careful analysis of localisation the numerical range of quasi-sectorial contractions
\cite{Zag, ArZ}, allows to lift the estimate in Theorem \ref{th:6.2.2} and in Corollary \ref{th:6.4.1}
to the ultimate optimal rate $O(1/n)$.

Note that the optimal estimate $O(1/n)$ in (\ref{eq:2.1.15}) one can easily obtain with help of the
spectral representation for a particular case of the \textit{self-adjoint} quasi-sectorial contractions,
i.e. for $\alpha=0$. This also concerns the optimal $O(1/n)$ rate of convergence of the self-adjoint Euler
approximation formula (\ref{eq:E}).
%%%%%%%%%%%%%%%%%%%%%%%%%%%%%%%%%%%%%%%%%%%%%%%%%%%%%%%%%%%%%%%%%%%%%%%%%%%%%%%%%%%%%%%%%%%%%%%%%%%%%%%%%%%%%%%
%\section{References}

\frenchspacing

%%%%%%%%%%%%%%%%%%%%%%%%%%%%%%%%%%%%%%%%%%%%%%%%%%%%%%%%%%%%%%%%%%%%%%%%%%%%%%%%%%%%%%%%%%%%%%%%%%%%%%%%%%%%%%%%


\begin{thebibliography}{7}
%%%%%%%%%%%%%%%%%%%%%%%%%%%%%%%%%%%%%%%%%%%%%%%%%%%%%%%%%%%%%%%%%%%%%%%%%%%%%%%%%%%%%%%%%%%%%%%%%%%%%%%%%%%%%%%%

\bibitem{ArZ}
Yu. Arlinski\u{i} and V. Zagrebnov,
Numerical range and quasi-sectorial contractions. \textit{J. Math. Anal. Appl.} \textbf{366} (2010),  33–-43

\bibitem{CaZ}
V. Cachia and V. A. Zagrebnov, Operator-Norm Approximation of Semigroups by
Quasi-sectorial Contractions. \textit{J.Fuct.Anal.}  \textbf{180} (2001), 176--194.

\bibitem{Ch}
P. R. Chernoff, Product formulas, nonlinear semigroups and addition of un-
bounded operators. \textit{Mem. Amer. Math. Soc.} \textbf{140} (1974),  1--121.

\bibitem{Pa}
V. Paulauskas, On operator-norm approximation of some semigroups by quasi-sectorial operators.
\textit{J. Funct. Anal.} \textbf{207} (2004) 58–-67.

\bibitem{Ri}
R. K. Ritt, A condition that $\lim_{n\rightarrow\infty} T^{n} =0$.
\textit{Proc. Amer. Math. Soc.} \textbf{4} (1953) 898–-899.

\bibitem{Zag}
V. A. Zagrebnov, Quasi-sectorial contractions. \textit{J. Fuct. Anal.} \textbf{254} (2008), 2503--2511.


\end{thebibliography}
\end{document}